\renewcommand{\epsilon}{\varepsilon}
\newcommand{\newsection}[1]
{\subsection{#1}\setcounter{theorem}{0} \setcounter{equation}{0}
\par\noindent}
\newtheorem{theorem}{Theorem}
\newtheorem{lemma}[theorem]{Lemma}
\newtheorem{corr}[theorem]{Corollary}
\newtheorem{proposition}[theorem]{Proposition}
\newtheorem{deff}[theorem]{Definition}
\newcommand{\bth}{\begin{theorem}}
\newcommand{\ble}{\begin{lemma}}
\newcommand{\bcor}{\begin{corr}}
\newcommand{\bdeff}{\begin{deff}}
\newcommand{\bprop}{\begin{proposition}}
\newcommand{\ele}{\end{lemma}}
\newcommand{\ecor}{\end{corr}}
\newcommand{\edeff}{\end{deff}}
\newcommand{\eprop}{\end{proposition}}
\newcommand{\supp}{\text{supp}}
\renewcommand{\Pi}{\varPi}
\renewcommand{\Re}{{\rm{Re} }\,}
\renewcommand{\epsilon}{\varepsilon}
\newcommand{\R}{{\mathbb R}}
\newcommand{\la}{{\langle}}
\newcommand{\ra}{{\rangle}}
\newcommand{\cd}{{\,\cdot\,}}
\renewcommand{\S}{{\mathbb{S}}}
\newcommand{\N}{{\mathbb{N}}}
\newcommand{\g}{{\mathfrak{g}}}
\newcommand{\ang}{{\not\negmedspace\nabla}}
\newcommand{\angdelta}{{\not\negthickspace\Delta}}
\begin{document}
\bibliographystyle{plain}

\title
{
On the interaction of metric trapping and a boundary
}

\author{Kiril Datchev}
\address{Department of Mathematics, Purdue University, West Lafayette,
  IN 47907-2067}
\email{kdatchev@purdue.edu}

\author{Jason Metcalfe}
\address{Department of Mathematics, University of North Carolina, Chapel Hill}
\email{metcalfe@email.unc.edu}

\author{Jacob Shapiro}
\address{Mathematical Sciences Institute, Australian National
  University, Acton, ACT 2601, Australia}
\address{Department of Mathematics, University of Dayton, Dayton, OH 45469-2316}

\email{jshapiro1@udayton.edu}

\author{Mihai Tohaneanu}
\address{Department of Mathematics, University of Kentucky, Lexington,
  KY 40506-0027}
\email{mihai.tohaneanu@uky.edu}

\thanks{K. Datchev was supported in part by NSF grant DMS-1708511,
  J. Shapiro was supported in part by the Australian Research Council
  through grant DP180100589, and M. Tohaneanu was supported in part by
Simons Collaboration Grant 586051}

\begin{abstract}
By considering a two ended warped product manifold, we demonstrate a
bifurcation that can occur when metric trapping interacts with a
boundary.  In this highly symmetric example, as the boundary passes
through the trapped set, one goes from a nontrapping scenario where
lossless local energy estimates are available for the wave equation to
the case of stably trapped rays where all but a logarithmic amount
of decay is lost.
\end{abstract}

\maketitle


\newsection{Introduction}
We explore the interaction of metric trapping and a boundary in an
explicit example and note an extreme bifurcation in the behavior of
the local energy for the wave equation as the boundary passes through
the trapping.  This is closely related to the instability of
ultracompact neutron stars as was examined in \cite{Keir}.  Here, we
instead examine a certain class of exterior domains with Dirichlet
boundary conditions on a warped product background geometry and
provide a more elementary argument.

For the Minkowski wave equation $\Box = \partial_t^2-\Delta$, we have
a conserved energy $E_0[u](t) = \frac{1}{2}\|\partial u(t,\cd)\|^2_{L^2}$ for
solutions to the homogeneous wave equation.  Here
$\partial u = (\partial_t u,\nabla_x u)$ denotes the space-time
derivative.  One common and robust measure of dispersion is called the (integrated)
local energy estimate, which involves examining the energy within a
compact set.  Specifically if we set
\[\|u\|_{LE[0,T]}=\sup_{j\ge 0}2^{-j/2} \|u\|_{L^2_tL^2_x([0,T]\times\{\la
    x\ra\approx 2^j\})},\quad \|u\|_{LE^1[0,T]}=\|(\partial u, \la
  x\ra^{-1}u)\|_{LE[0,T]}\]
and
\[\|F\|_{LE^*[0,T]} = \sum_{j\ge 0} 2^{j/2} \|F\|_{L^2_tL^2_x([0,T]\times\{\la
    x\ra\approx 2^j\})},\]
we have
\[\|\partial u\|_{L^\infty_t L^2_x} + \|u\|_{LE^1} \lesssim \|\partial
  u(0,\cd)\|_{L^2} + \|\Box u\|_{L^1_tL^2_x + LE^*}\]
on $(1+3)-$dimensional Minkowski space.  Here $LE^1$ and $LE^*$ are understood to
denote $LE^1[0,\infty)$, $LE^*[0,\infty)$.  Such estimates originated in
\cite{Mora1, Mora2}.  See, e.g., \cite{MST} for some of the
most general results and a more complete history.

On nonflat geometries, the null geodesics, which packets of the
solution tend to flow along, are no longer necessarily straight lines.  And in
certain geometries, null geodesics may stay in a compact set for all
times, and when this happens, trapping is said to occur.  Trapping is
a known obstruction to local energy estimates.  In fact,
\cite{Ralston}, \cite{Sbierski} show that the local energy estimate as
 stated above cannot hold when trapped rays exist.

When the trapping is sufficiently unstable, it is often the case that
local energy estimates may be recovered with a small loss, which is
often realized as a loss of regularity.  This is what happens, e.g.,
on the Schwarzschild space-times \cite{MMTT_Str_BH}.  There it is shown that
a logarithmic loss of regularity suffices.  See \cite{Ik_two, Ik_several} for the seminal
results in this direction.  
When, however, the trapping is elliptic
(i.e. stable), it is known that nearly all decay is lost.  See, e.g.,
\cite{Burq_Fr}, \cite{HS}, \cite{Keir}.  In both of these cases,
numerous related results have followed.  See, e.g., \cite[Chapter 6]{DZ_book}.
The surfaces that we consider
are from \cite{CW}, \cite{BCMP} where they were shown to generate an
example of trapping for which an algebraic loss of regularity is both
necessary and sufficient.

The notion of being nontrapping is generally known to be stable in the
sense that a sufficiently small perturbation of a nontrapping metric
remains nontrapping.  Here, however, we show that a drastic
bifurcation can happen when metric trapping interacts with a
boundary.  Specifically, on the surfaces used in \cite{CW},
\cite{BCMP}, we shall demonstrate that lossless local energy estimates
are available when a boundary exists on one side of the trapping.  But
as soon as that boundary passes through the trapped set, the
interaction with the metric causes stable trapping to form.
In this setting, we
demonstrate that at most a logarithmic amount of decay is available
and no loss of regularity is sufficient in order to recover a local
energy estimate.

Specifically we consider the warped products that were examined in
\cite{CW}, \cite{BCMP}.  That is, we examine $\R\times\R\times\S^2$
with 
\[ds^2=-dt^2+dx^2+a(x)^2\,d\sigma_{\S^2}^2,\quad
  a(x)=(x^{2m}+1)^{1/2m},\quad m\in\N.\]
Here $(\S^2,
d\sigma^2_{\S^2})$ is the two-dimensional round sphere.
This geometry is asymptotically flat on both of its ends, and trapping occurs
at $x=0$.  When $m\ge 2$, the trapping is degenerate, while for $m=1$
the trapping resembles that of the Schwarzschild metric.  We use $M_{x_0}$ to denote the space $(x_0,\infty)\times \S^2$ equipped with the metric
$dx^2+a(x)^2d\sigma^2_{\S^2}$.  
We will set $dV=a(x)^2\,dx\,d\sigma_{\S^2}$, while
the volume form of the whole space-time $\R_+\times M_{x_0}$, then, is $dV\,dt$. The arguments of this paper should also apply if $\S^2$ is replaced by other compact manifolds, but we will not pursue it here.

On this background (and in these coordinates), the wave equation is given by
\[\Box_\g u = - \partial_t^2u + \Delta_{\g}u= - \partial_t^2 u+
  a(x)^{-2}\partial_x\Bigl[a(x)^2\partial_x  u\Bigr] +
  a(x)^{-2} \angdelta_{\S^2} u.\]
We consider the boundary value problem with Dirichlet boundary conditions
\begin{equation}
  \label{bvp}
  \begin{split}
    \Box_\g u &= F(t,x,\omega),\quad\qquad\qquad (t,x,\omega)\in \R_+\times \{x\ge x_0\} \times
    \S^2,\\
    u(t,x_0,\omega)&=0,\\
    u(0,x,\omega)=u_0(x,\omega)&,\quad \partial_t u(0,x,\omega)=u_1(x,\omega).
  \end{split}
\end{equation}

Our methods do not directly apply for other boundary conditions. For example, in the proof of Theorem~\ref{PositiveThm} for Neumann boundary conditions, there will be an extra boundary term on $x=x_0$ with the wrong sign.

This static space-time and the Dirichlet boundary conditions naturally
yield a coercive conserved energy for solutions to the homogeneous
equation ($F\equiv 0$)
\[E[u](t) = \frac{1}{2} \int_{x\ge x_0} (\partial_t u)^2 + (\partial_x u)^2 +
  a(x)^{-2} |\ang_0 u|^2\,dV,\]
where $\ang_0$ denotes the derivatives tangent to the unit sphere.
More generally, we have
\begin{equation}
  \label{energy}
  E[u](t) \le E[u](0) + \int_0^t \int_{x\ge x_0} |\Box_\g u|
  |\partial_t u|\,dV\,dt.
\end{equation}

We first consider the case of $x_0>0$.  In this realm, the trapping is
not observed and the effect of the boundary is akin to the case of
star-shaped obstacles as was examined in \cite{MS_SIAM}.  We note
that, in this case, we can significantly simplify the argument of  \cite{BCMP} and indeed select a single
multiplier that will yield the result rather than needing to consider
high and low frequency regimes separately.  

\begin{theorem}\label{PositiveThm}
If $x_0>0$, 
then solutions to
the wave equation \eqref{bvp} satisfy the lossless local
energy estimate\footnote{The analog of the $LE$ norm here is
  \[\|u\|_{LE[0,T]} = \sup_{j\ge 0} 2^{-j/2} \Bigl(\int_0^T \int_{\{\la
    x\ra \approx 2^j\}\cap \{x\ge x_0\}} \int_{\S^2} |u(t,x,\omega)|^2
  a(x)^2\,d\sigma(\omega)\,dx\,dt\Bigr)^{1/2},\]
with similar adjustments for $LE^*$.}
\begin{equation}
  \label{LEpositive}
  \|u\|^2_{LE^1} + \sup_t E[u](t) \lesssim E[u](0) + \|\Box_\g
  u\|^2_{L^1_tL^2_{M_{x_0}}+LE^*}.
\end{equation}
\end{theorem}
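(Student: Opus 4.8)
The plan is to prove \eqref{LEpositive} with a single Morawetz (positive commutator) multiplier, exploiting that $x_0>0$ places the photon sphere $\{x=0\}$ outside the domain, so that $a'>0$ on all of $\{x\ge x_0\}$ and no frequency-localized analysis of the trapping is needed. As a preliminary one may decompose $u=\sum_\ell u_\ell$ into spherical harmonics (or simply keep $u$ and use $-\angdelta_{\S^2}\ge 0$) and, if convenient, conjugate by $a$: with $v=a(x)u$, \eqref{bvp} becomes on each mode $-\partial_t^2 v_\ell+\partial_x^2 v_\ell-\big(\tfrac{a''}{a}+\tfrac{\ell(\ell+1)}{a^2}\big)v_\ell=aF_\ell$, $v_\ell(t,x_0)=0$, where $\tfrac{a''}{a}=O(\la x\ra^{-2})$ is short range and $\tfrac{\ell(\ell+1)}{a^2}\ge 0$. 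I will describe the argument directly in terms of $u$ and the energy $E[u]$.

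Choose a smooth bounded $f$ on $[x_0,\infty)$ with $f(x_0)=0$, $f'>0$, and $f\to\text{const}$ as $x\to\infty$; for instance $f(x)=1-\la x_0\ra\la x\ra^{-1}$, which has $f(x_0)=0$ and $f'(x)=\la x_0\ra\,x\,\la x\ra^{-3}>0$ since $x\ge x_0>0$. Set $Qu=f\,\partial_x u+\tfrac12 a^{-2}\partial_x(a^2 f)\,u+\epsilon\,\chi(x)\la x\ra^{-1}u$, with $\chi$ a fixed bump supported where $f'>0$ and $\epsilon>0$ small, multiply $\Box_\g u=F$ by $2Qu$, and integrate over $[0,T]\times M_{x_0}$ against $dV\,dt$, integrating by parts in $t$, $x$, and $\S^2$. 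Three kinds of terms appear: (a) fluxes at $t=0,T$, bounded by $\sup_{[0,T]}E[u](t)$ since $f$ and the zeroth-order coefficients are bounded; (b) a boundary term at $x=x_0$, where the Dirichlet condition kills $\partial_t u$, $\ang_0 u$ and $u$, leaving only a multiple of $f(x_0)\int_0^T\!\!\int_{\S^2}(\partial_x u)^2(t,x_0,\omega)\,d\sigma\,dt$, which vanishes since $f(x_0)=0$ (for Neumann the surviving terms have the wrong sign, as noted before the theorem); and (c) a bulk term which, up to the overall sign of $Q$, equals $\int_0^T\!\!\int_{M_{x_0}}\big[f'(\partial_x u)^2+\tfrac{fa'}{a}\,a^{-2}|\ang_0 u|^2+d(x)\,u^2+(\text{lower order in }\partial_t u)\big]dV\,dt$, whose $(\partial_x u)^2$ and angular coefficients are nonnegative precisely because $f'>0$ and $a'>0$ throughout $\{x\ge x_0\}$; the $\epsilon\chi\la x\ra^{-1}u$ term supplies a positive $u^2$ contribution on $\{\chi>0\}$, and the remaining short-range $u^2$ pieces are absorbed, or dispensed with by Hardy's inequality once $\partial_x u$ is controlled.

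The forcing is handled by $\big|\int 2Qu\,F\,dV\,dt\big|\lesssim \|F\|_{L^1_tL^2_{M_{x_0}}+LE^*}\big(\sup_t E[u]^{1/2}+\|u\|_{LE^1}\big)$, pairing the $L^1_tL^2$ part with $\|Qu\|_{L^\infty_tL^2}\lesssim\sup_t E[u]^{1/2}$ and the $LE^*$ part with $\|Qu\|_{LE}\lesssim\|u\|_{LE^1}$ by $LE$–$LE^*$ duality. This controls $\int_0^T\!\int_{|x|\lesssim R}(|\partial u|^2+|u|^2)\,dV\,dt$ together with a weighted global bound, by $\sup_t E[u]+\|F\|^2_{L^1_tL^2_{M_{x_0}}+LE^*}$ modulo a small multiple of $\|u\|_{LE^1}^2$. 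Local $LE$ control of $\partial_t u$ is recovered from $\partial_x u$, $\ang_0 u$, $u$ and $F$ via the equation; the $\la x\ra^{-1}u$ term via Hardy; and the full $LE^1$ norm near $x=+\infty$ by the standard asymptotically-flat argument (the end has $a(x)\sim x$, so $dV$ behaves as in flat $\R^3$; cf.\ \cite{MST, MS_SIAM}) combined with the energy estimate \eqref{energy}, which also yields $\sup_t E[u]\lesssim E[u](0)+\|F\|^2_{L^1_tL^2_{M_{x_0}}}$. Absorbing the small $\|u\|_{LE^1}^2$ term into the left side gives \eqref{LEpositive}.

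The main obstacle is step (c): making \emph{every} bulk coefficient have a workable sign with one multiplier. This is exactly where $x_0>0$ is used---$a'>0$ on all of $\{x\ge x_0\}$ permits a monotone $f$ with $f'>0$ everywhere, whereas for $x_0\le 0$ the critical point of $a$ at the trapped set $\{x=0\}$ forces $f$ to be non-monotone there and one must split into high- and low-frequency regimes, as in \cite{BCMP}. The favorable (here, vanishing) sign of the $x=x_0$ boundary term is the reason the Dirichlet condition is imposed.
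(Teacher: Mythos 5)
Your overall architecture is the same as the paper's: apply the multiplier identity \eqref{ibp} with a single radial multiplier $f(x)\partial_x u+g(x)u$, use the Dirichlet condition together with the value of $f$ at $x_0$ to dispose of the spatial boundary term (your choice $f(x_0)=0$ makes it vanish, the paper's $f=x^2/a^2$ makes it nonnegative; either is fine), control the time-boundary and zeroth-order terms by the energy via the Hardy inequality \eqref{hardy}, treat the $L^1_tL^2+LE^*$ forcing by duality, and repair the non-sharp weights at spatial infinity by a standard asymptotically flat exterior estimate (the paper quotes \cite[Proposition 2.3]{BCMP}, i.e.\ \eqref{BCMPestimate}). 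The difference lies in the one step that carries the content of the theorem, the sign bookkeeping of the bulk terms, and there your argument has a genuine gap.

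Writing $g=\tfrac12 a^{-2}(a^2f)'+h$, the bulk coefficients in \eqref{ibp} are $f'+h$ for $(\partial_x u)^2$, $f\,a'/a+h$ for $a^{-2}|\ang_0 u|^2$, $-h$ for $(\partial_t u)^2$, and $-\tfrac12 a^{-2}(a^2g')'$ for $u^2$. Your correction $h=\epsilon\chi\la x\ra^{-1}\ge 0$ therefore produces a \emph{negative} $(\partial_t u)^2$ coefficient $-\epsilon\chi\la x\ra^{-1}$; this is not ``lower order'' --- it is exactly one of the quantities the $LE^1$ norm must control. Likewise, the claim that the $\epsilon\chi\la x\ra^{-1}u$ term ``supplies a positive $u^2$ contribution on $\{\chi>0\}$'' misidentifies the mechanism: the $u^2$ coefficient is the second-derivative expression $-\tfrac12 a^{-2}(a^2g')'$, which has no definite sign for your $g$ (in particular on $\supp\chi'$). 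This is precisely why the paper \emph{subtracts}, taking $g=\tfrac12 a^{-2}(a^2f)'-\delta x^{1+2m}(1+x^{2m})^{-2-\frac 1m}$ with $f=x^2/a^2$: the minus sign yields the positive $(\partial_t u)^2$ coefficient $\delta x^{1+2m}(1+x^{2m})^{-2-\frac1m}$, and the positivity of the $u^2$ coefficient is then checked by explicit computation for $\delta$ small, so all four bulk terms are favorable at once and \eqref{LElocal} follows directly. Your fallback --- recover local $(\partial_t u)^2$ control a posteriori from the equation (multiplying by $\chi u$) and absorb the $O(\epsilon)$ wrong-signed pieces --- can probably be made to work, but it is not carried out, and the absorption must be done with care since the terms being absorbed (local $(\partial_t u)^2$ and $u^2$) are exactly the quantities being estimated, so the two estimates have to be closed simultaneously with $\epsilon$-independent constants. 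As written, the positivity step is asserted rather than proved; either redo it with a sign-correct $g$ as in the paper, or supply the omitted recovery-and-absorption argument.
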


For each $R>0$, we shall consider the local energy
\[E_{R}[u](t) = \frac{1}{2}\int_{x_0\le x\le R} (\partial_t u)^2 + (\partial_x
  u)^2 + a(x)^{-2} |\ang_0 u|^2\,dV.\]
We shall use
\[\|(u_0,u_1)\|_{D(B^k)} := \|(u_0,u_1)\|_{H_{x_0}} +
  \|B^k(u_0,u_1)\|_{H_{x_0}}\]
with
\[H_{x_0} := \dot{H}_0^1(M_{x_0}) \oplus L^2(M_{x_0}),\quad B:=
  \begin{bmatrix}
    0&iI\\i\Delta_{\g}&0
  \end{bmatrix}.\]

Our second theorem then says that when $x_0<0$ all but a logarithmic
amount of decay is lost no matter what loss of regularity $k$ is
permitted.  
\begin{theorem}\label{NegativeThm}
Let $x_0<0$, and fix $R>0$.  Then for any $k\in \N$, if $u$ solves
\eqref{bvp} with $F\equiv 0$, 
\begin{equation}
  \label{1.3}
  \lim\sup_{t\to \infty} \Bigl(\log^k(t) \sup_{u_0,u_1} \frac{E^{1/2}_R[u](t)}{\|(u_0,u_1)\|_{D(B^k)}}\Bigr)>0,
\end{equation}
where the supremum is taken over all $u_0,u_1\in C^\infty(M_{x_0})$
supported in $\{x_0\le x<R\}$ that vanish when $x=x_0$.
\end{theorem}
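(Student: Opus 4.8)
The plan is, for $x_0<0$, to manufacture a sequence of high-frequency quasimodes concentrated in a fixed compact subset of $\{x_0\le x<R\}$ whose errors decay exponentially in the frequency, and then to run a Ralston-type argument based on Duhamel's formula and the energy inequality \eqref{energy}. The mechanism is that when $x_0<0$ the Dirichlet wall at $x=x_0$ and the barrier the metric produces at $x=0$ enclose a genuine potential well, hence stably trapped rays. To set this up I would separate variables: let $Y_\ell$ be a spherical harmonic with $\angdelta_{\S^2}Y_\ell=-\ell(\ell+1)Y_\ell$ and $\|Y_\ell\|_{L^2(\S^2)}=1$. Using the identity $a^{-2}\partial_x(a^2\partial_x(a^{-1}\psi))=a^{-1}(\psi''-(a''/a)\psi)$, the ansatz $u=e^{i\lambda t}a(x)^{-1}\psi(x)Y_\ell(\omega)$ turns $\Box_\g u=0$ into $-\psi''+V_\ell\psi=\lambda^2\psi$ on $(x_0,\infty)$ with $\psi(x_0)=0$, where $V_\ell(x)=a''(x)/a(x)+\ell(\ell+1)/a(x)^2$, and $\psi\mapsto a^{-1}\psi Y_\ell$ is an isometry from $L^2(dx)$ onto $L^2(M_{x_0})$. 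Since $a$ is even, $\ge 1$, and increasing in $|x|$, while $0\le a''/a=(2m-1)x^{2m-2}(x^{2m}+1)^{-2}$ is bounded, $V_\ell$ is a bump of height $\asymp\ell^2$ peaked at $x=0$.

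Next I would exhibit the well. Fix any $b\in(x_0,0)$, so $a(b)<a(x_0)$ because $|b|<|x_0|$. For $\ell$ large $V_\ell$ is increasing on $[x_0,b]$ with $|V_\ell'|\le C\ell^2$ there, $\min_{[x_0,b]}V_\ell=V_\ell(x_0)=\ell(\ell+1)/a(x_0)^2+O(1)$, and $V_\ell(b)=\ell(\ell+1)/a(b)^2+O(1)$, so $V_\ell(b)-\min_{[x_0,b]}V_\ell\ge\delta_0\ell^2$ for some $\delta_0>0$. Let $E_0$, $\tilde\psi>0$ with $\|\tilde\psi\|_{L^2(x_0,b)}=1$, be the bottom Dirichlet eigenpair of $-\partial_x^2+V_\ell$ on $(x_0,b)$. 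A min-max estimate with a trial function localized near $x_0$ gives $E_0\le\min_{[x_0,b]}V_\ell+C\delta\,\ell^2+C_\delta$ for every $\delta>0$, so for $\delta$ small and $\ell$ large $E_0\le V_\ell(b)-\tfrac{\delta_0}{2}\ell^2$; hence the classically forbidden set $\{V_\ell>E_0\}\cap(x_0,b)$ is an interval $(x_*,b)$ with $b-x_*\ge2\eta$ and $V_\ell-E_0\ge\kappa\ell^2$ on $[b-\eta,b]$, for fixed $\eta,\kappa>0$. Put $\lambda_\ell:=\sqrt{E_0}\asymp\ell$. On $[b-\eta,b]$ the function $\tilde\psi$ solves $\tilde\psi''=(V_\ell-E_0)\tilde\psi$ with $V_\ell-E_0\ge\kappa\ell^2$; comparing with the supersolution solving $h''=\kappa\ell^2h$ (maximum principle), together with $\|\tilde\psi\|_{L^\infty}\lesssim\ell^{1/2}$, yields $|\tilde\psi|+|\tilde\psi'|\le e^{-c\ell}$ on $[b-\eta,b-\eta/2]$ for some $c>0$. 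Pick $\chi\in C^\infty(\R)$ equal to $1$ on $(-\infty,b-\eta]$ and to $0$ on $[b-\eta/2,\infty)$, and set $v_\ell:=N_\ell a^{-1}\chi\tilde\psi\,Y_\ell$, with $N_\ell\to1$ chosen so that $\|v_\ell\|_{L^2(M_{x_0})}=1$. Then $v_\ell\in C^\infty(M_{x_0})$ is supported in the fixed compact set $\{x_0\le x\le b-\eta/2\}\subset\{x_0\le x<R\}$, vanishes at $x=x_0$, and
\[
(\Delta_\g+\lambda_\ell^2)v_\ell=N_\ell a^{-1}\bigl(\chi''\tilde\psi+2\chi'\tilde\psi'\bigr)Y_\ell=:\rho_\ell,\qquad \|\rho_\ell\|_{L^2(M_{x_0})}\le C_0e^{-c\ell},
\]
because $\chi',\chi''$ are supported in $[b-\eta,b-\eta/2]$; the same bound holds for $\Delta_\g^j\rho_\ell$ up to a power of $\ell$. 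Combined with $\Delta_\g^p v_\ell=(-\lambda_\ell^2)^pv_\ell+O(\ell^{2p}e^{-c\ell/2})$ in $L^2(M_{x_0})$ and the fact that $v_\ell$ is an exact eigenfunction near $x=x_0$ (so the compatibility conditions hold), this gives $(v_\ell,i\lambda_\ell v_\ell)\in D(B^k)$ with $\|(v_\ell,i\lambda_\ell v_\ell)\|_{D(B^k)}\le C_k\lambda_\ell^{k+1}$.

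To conclude, let $u_\ell$ solve \eqref{bvp} with $F\equiv0$ and data $(v_\ell,i\lambda_\ell v_\ell)$, and write $u_\ell=e^{i\lambda_\ell t}v_\ell+w_\ell$, so $\Box_\g w_\ell=-e^{i\lambda_\ell t}\rho_\ell$ with zero data and $w_\ell|_{x=x_0}=0$. Then \eqref{energy} and a Gr\"onwall argument give $E^{1/2}[w_\ell](t)\le\tfrac{t}{\sqrt2}\|\rho_\ell\|_{L^2(M_{x_0})}\le\tfrac{C_0}{\sqrt2}\,t\,e^{-c\ell}$. Take $t_\ell:=e^{c\ell/4}$. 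Then $E_R^{1/2}[w_\ell](t_\ell)\le\tfrac{C_0}{\sqrt2}e^{-c\ell/2}\to0$, whereas $E_R^{1/2}[e^{i\lambda_\ell t}v_\ell](t_\ell)\ge\tfrac1{\sqrt2}\lambda_\ell$ (independent of $t_\ell$), since $v_\ell$ is supported in $\{x\le R\}$ and $|\partial_t(e^{i\lambda_\ell t}v_\ell)|^2=\lambda_\ell^2|v_\ell|^2$. As $E_R^{1/2}[\cdot](t)$ is a seminorm in $(u(t),\partial_tu(t))$, the triangle inequality gives $E_R^{1/2}[u_\ell](t_\ell)\ge\tfrac12\lambda_\ell$ for $\ell$ large, whence
\[
\log^k(t_\ell)\,\frac{E_R^{1/2}[u_\ell](t_\ell)}{\|(v_\ell,i\lambda_\ell v_\ell)\|_{D(B^k)}}\ \ge\ \Bigl(\tfrac{c\ell}{4}\Bigr)^{k}\frac{\lambda_\ell/2}{C_k\lambda_\ell^{k+1}}\ =\ \frac{(c/4)^k}{2C_k}\,\frac{\ell^k}{\lambda_\ell^k}\ \asymp\ 1
\]
uniformly for $\ell$ large; since $t_\ell\to\infty$ and $(v_\ell,i\lambda_\ell v_\ell)$ is admissible data, \eqref{1.3} follows. (If only real data is allowed, take $v_\ell$ built from a real spherical harmonic, use data $(v_\ell,0)$, compare $u_\ell$ with $\cos(\lambda_\ell t)v_\ell$, and evaluate at $t_\ell\in\tfrac\pi{\lambda_\ell}\Z$ near $e^{c\ell/4}$, where $\cos(\lambda_\ell t_\ell)=\pm1$ and the energy of $\pm v_\ell$ is $\gtrsim\lambda_\ell^2$ because $\tilde\psi$ concentrates near $x_0$, so $\int a^{-2}|\ang_0v_\ell|^2\,dV\asymp\ell(\ell+1)/a(x_0)^2\asymp\lambda_\ell^2$.)

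The main obstacle is the well estimate: showing that the bottom Dirichlet eigenvalue on $(x_0,b)$ lies a distance $\gtrsim\ell^2$ below the barrier top $V_\ell(b)$. This is precisely where $x_0<0$ is used — when $x_0\ge0$ the wall sits where $a$ is increasing, no barrier separates it from infinity, and one is in the nontrapping regime of Theorem~\ref{PositiveThm} — so it is the step that genuinely encodes the bifurcation. Once it is in place, the exponential decay of $\tilde\psi$ into the forbidden region is routine (an Agmon, or supersolution, estimate), the bookkeeping of the $D(B^k)$-norm of the quasimode is elementary, and the quasimode-to-decay mechanism is standard.
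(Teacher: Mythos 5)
Your proposal is correct and follows essentially the same route as the paper: separation of variables into spherical harmonics, the ground state of the resulting one-dimensional Dirichlet well (pinned at least $\gtrsim\ell^2$ below the barrier top by a Rayleigh-quotient test function near $x=x_0$, just as the paper compares with an explicit square well), an Agmon-type exponential decay estimate into the classically forbidden region so that cutting off produces quasimodes with $O(e^{-c\ell})$ errors, and then the standard Ralston/Duhamel (here Gr\"onwall) argument on times of order $e^{c\ell}$, exactly as in the paper's proof via Proposition~\ref{prop_quasimode}. One minor slip to fix: the supersolution comparison on $[b-\eta,b]$ (with the bound $\|\tilde\psi\|_{L^\infty}\lesssim \ell^{1/2}$ imposed at the left endpoint) yields exponential smallness only on the right-hand portion, say $[b-\eta/2,b]$, not on $[b-\eta,b-\eta/2]$ as written, so you should either establish $V_\ell-E_0\gtrsim\ell^2$ on the larger interval $[b-2\eta,b]$ (immediate from $|V_\ell'|\lesssim\ell^2$ and $E_0\le V_\ell(b)-\tfrac{\delta_0}{2}\ell^2$) or place the cutoff transition deeper inside the forbidden region.
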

We
note that the lower bound that we obtain here matches up nicely with
the decay obtained in \cite[Th\'eor\`eme 1.1]{Burq_Fr}, namely
\[E_R^{1/2}[u](t)\lesssim
  \frac{\|(u_0,u_1)\|_{D(B^k)}}{\log^k(t+2)},\quad t\ge 0.\]
Note, however, that the assumptions in \cite{Burq_Fr} are not exactly
the same as ours, requiring in particular $a(x)=x$ for $x\gg 1$. While we expect a similar result to hold in our context, we do not prove it here.

We also remark that there is no reason that $R>0$ is required.  The
choice was made simply to reduce the number of parameters for the sake
of clarity.

Due to the presence of stably trapped broken
bicharacteristics, the integrated local energy estimate must also
fail.  While the above morally states that the solution decays at most
logarithmically, it does not strictly rule out the possibility of an
integrated local energy estimate.  Thus, we also prove the following.
\begin{theorem}\label{NegativeLEThm}
 For any $A>0$ and any $k\in \N$, there exists a $T>0$ and data
$u_0,u_1\in C^\infty(M_{x_0})$, which are 
supported in $\{x_0\le x<R\}$ and vanish when $x=x_0$,
so that the solution $u$ to \eqref{bvp} when $F\equiv 0$ satisfies
 \begin{equation}
   \label{negativele}
   \|u\|_{LE^1[0,T]} > A \|(u_0,u_1)\|_{D(B^k)}.
 \end{equation}
\end{theorem}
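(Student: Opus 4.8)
The plan is to quantify how long the solution generated by a high–angular–momentum quasimode stays concentrated near the trapped region, and then to integrate the (essentially conserved) local energy over that time window. The input is the quasimode construction that also underlies Theorem~\ref{NegativeThm}: when $x_0<0$, the Dirichlet wall at $x=x_0$ together with the barrier of the separated radial potential
\[V_\ell(x)=\frac{a''(x)}{a(x)}+\frac{\ell(\ell+1)}{a(x)^2}\]
at $x=0$ confine each spherical–harmonic mode to a compact well, and Agmon/WKB estimates across that barrier produce a sequence $\ell_n\to\infty$, frequencies $\lambda_n\simeq\ell_n$, and functions $v_n\in C^\infty(M_{x_0})$ supported in $\{x_0\le x<R\}$, vanishing at $x=x_0$, with $\|v_nY_{\ell_n}\|_{L^2(M_{x_0})}=1$ and
\[\|(\Delta_\g+\lambda_n^2)(v_nY_{\ell_n})\|_{H^s(M_{x_0})}\le\epsilon_n\qquad(0\le s\le k),\]
where $Y_{\ell_n}$ is a spherical harmonic of degree $\ell_n$ and $\epsilon_n$ decays faster than every negative power of $\lambda_n$ (indeed exponentially). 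I will take this construction --- which is the heart of Theorem~\ref{NegativeThm} --- as given, and flag below exactly which of its features are essential.

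Granting it, fix $A>0$ and $k\in\N$; by splitting into real and imaginary parts it suffices to allow complex-valued data in \eqref{negativele}. Let $n$ be large (chosen at the very end), set $(u_0,u_1)=(v_nY_{\ell_n},\,i\lambda_nv_nY_{\ell_n})$, let $u$ solve \eqref{bvp} with $F\equiv0$, and compare $u$ with $\tilde u(t)=e^{i\lambda_n t}v_nY_{\ell_n}$, which has the same Cauchy and Dirichlet data and satisfies $\Box_\g\tilde u=e^{i\lambda_n t}(\Delta_\g+\lambda_n^2)(v_nY_{\ell_n})$, of size $\le\epsilon_n$ in $L^2(dV)$ for every $t$. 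Then $w=u-\tilde u$ has vanishing data and $\Box_\g w=-\Box_\g\tilde u$, so the energy inequality \eqref{energy} and Gr\"onwall's inequality give $E^{1/2}[w](t)\lesssim\epsilon_n t$, whence $\int_{x\ge x_0}|\partial_t w(t)|^2\,dV\lesssim\epsilon_n^2t^2$. Since $|\partial_t\tilde u(t)|^2=\lambda_n^2|v_nY_{\ell_n}|^2$ integrates to $\lambda_n^2$ over $\{x_0\le x\le R\}$ for every $t$, the elementary bound $|\partial_t u|^2\ge\tfrac12|\partial_t\tilde u|^2-|\partial_t w|^2$ integrated in $t$ yields
\[\int_0^T\!\!\int_{x_0\le x\le R}|\partial_t u|^2\,dV\,dt\ \gtrsim\ \lambda_n^2T-\epsilon_n^2T^3\ \gtrsim\ \lambda_n^2T\qquad\text{provided }T\lesssim\lambda_n/\epsilon_n.\]
As $\{x_0\le x\le R\}$ meets only boundedly many of the dyadic regions $\{\la x\ra\approx2^j\}$, the left-hand side is $\lesssim\|u\|^2_{LE^1[0,T]}$, so $\|u\|_{LE^1[0,T]}\gtrsim\lambda_nT^{1/2}$ for such $T$.

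Next I bound the data. With $g_n:=(\Delta_\g+\lambda_n^2)(v_nY_{\ell_n})$ one computes $B(u_0,u_1)=-\lambda_n(u_0,u_1)+(0,ig_n)$, and iterating, $B^k(u_0,u_1)=(-\lambda_n)^k(u_0,u_1)+\sum_{b=0}^{k-1}(-\lambda_n)^{k-1-b}B^b(0,ig_n)$. Since $B^b(0,ig_n)$ equals, up to sign, $(0,i\Delta_\g^{b/2}g_n)$ for even $b$ and $(\Delta_\g^{(b-1)/2}g_n,0)$ for odd $b$, its $H_{x_0}$-norm is $\lesssim\|g_n\|_{H^b}\le\epsilon_n$, so the remainder has $H_{x_0}$-norm $\lesssim_k\lambda_n^{k-1}\epsilon_n$. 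Together with $\|(u_0,u_1)\|_{H_{x_0}}\lesssim\lambda_n$ (immediate from the quasimode bound, using $\la-\Delta_\g u_0,u_0\ra=\lambda_n^2-\la g_n,u_0\ra$), this gives $\|(u_0,u_1)\|_{D(B^k)}\lesssim\lambda_n^{k+1}$ once $n$ is large. Hence
\[\frac{\|u\|_{LE^1[0,T]}}{\|(u_0,u_1)\|_{D(B^k)}}\ \gtrsim\ \frac{T^{1/2}}{\lambda_n^{k}},\]
so \eqref{negativele} holds as soon as $T\gtrsim A^2\lambda_n^{2k}$; this is consistent with the requirement $T\lesssim\lambda_n/\epsilon_n$ of the previous paragraph precisely when $\epsilon_n\lesssim A^{-2}\lambda_n^{1-2k}$, which holds for all large $n$ because $\epsilon_n$ beats every power of $\lambda_n$. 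Fixing such an $n$ and then $T\simeq A^2\lambda_n^{2k}$ completes the argument.

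I expect the only genuine difficulty to lie in the first paragraph: producing a quasimode that is honestly supported in $\{x_0\le x<R\}$ (so that it furnishes admissible data) while its defect $(\Delta_\g+\lambda_n^2)(v_nY_{\ell_n})$ is super-polynomially small not merely in $L^2$ but in the Sobolev norms $H^s$, $s\le k$ --- which is what forces the $D(B^k)$ norm of the data to behave like that of a genuine mode, namely $\lambda_n^{k+1}$. This is exactly where $x_0<0$ enters: it is the confining well (wall on one side, barrier at $x=0$ on the other) that makes such nearly exact, compactly supported quasimodes available, via the exponential Agmon decay of the WKB solution across the barrier; for $x_0>0$ no such well exists, consistent with Theorem~\ref{PositiveThm}. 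Everything after that first paragraph is routine --- an energy estimate for $w$ and elementary bookkeeping in $T$, $\lambda_n$, $\epsilon_n$ --- and in fact these ingredients can alternatively be read off directly from the proof of Theorem~\ref{NegativeThm}, since the data used there already keeps $E_R[u](t)$ comparable to $\lambda_n^2$ on a time interval of length comparable to $\lambda_n/\epsilon_n$.
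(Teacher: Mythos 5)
Your argument is correct and follows essentially the paper's own route: quasimode data from Proposition~\ref{prop_quasimode}, a Duhamel/energy comparison with $e^{i\lambda_n t}v_n$ showing the local energy near the trapping stays $\gtrsim \lambda_n$ on a long time window, the bound $\|(u_0,u_1)\|_{D(B^k)}\lesssim \lambda_n^{k+1}$, and integration in time to defeat any constant $A$. The only (harmless) difference is that you stop at a polynomially long window $T\simeq A^2\lambda_n^{2k}$, whereas the paper integrates over the full exponentially long interval $t_j\simeq e^{c\tau_j}$ on which the comparison persists.
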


\newsection{Proof of Theorem~\ref{PositiveThm}: Nontrapping with a star-shaped boundary}
By standard approximation arguments, we may assume that $u_0$, $u_1$,
and $F$ have spatial support inside a fixed ball.  Finite speed of
propation implies that $u(t,x,\omega)$ has compact support in $x$ for
any $t$.

For, say, $f, g\in C^2$, integration by parts and the Dirichlet
boundary conditions give
\begin{multline}
  \label{ibp}
  -\int_0^T\int_{x\ge x_0} \Box_\g u \Bigl[f(x)\partial_x u + g(x)
  u\Bigr]\,dV\,dt = \int_{x\ge x_0} \partial_t u \Bigl[f(x)\partial_x u + g(x)
  u\Bigr]\,dV\Bigl|_0^T
\\+ \int_0^T\int_{x\ge x_0} \Bigl[f'(x)+g(x)-\frac{1}{2}a(x)^{-2}\frac{d}{dx}
\Bigl(a(x)^2 f(x)\Bigr)\Bigr] (\partial_x u)^2\,dV\,dt
\\+\int_0^T\int_{x\ge x_0} \Bigl[f(x)\frac{a'(x)}{a(x)}+g(x)-\frac{1}{2}a(x)^{-2}\frac{d}{dx}
\Bigl(a(x)^2 f(x)\Bigr)\Bigr] a(x)^{-2}|\ang_0 u|^2\,dV\,dt
\\+\int_0^T \int_{x\ge x_0} \Bigl[-g(x)+\frac{1}{2}a(x)^{-2}\frac{d}{dx}
\Bigl(a(x)^2 f(x)\Bigr)\Bigr] (\partial_t u)^2\,dV\,dt
\\-\frac{1}{2}\int_0^T\int_{x\ge x_0} \Bigl(a(x)^{-2}\frac{d}{dx}
[a(x)^2 g'(x)]\Bigr) u^2\,dV\:dt +
\frac{1}{2}\int_0^T\int_{\S^2} f(x_0) (\partial_x u)^2|_{x=x_0} a(x_0)^2\,d\sigma_{\S^2}dt.
\end{multline}
The identity \eqref{ibp} can alternatively be seen by applying the
Fundamental Theorem of Calculus to
\[\int_0^T \int_{x\ge x_0} \Bigl(\partial_t I_1 + \partial_x I_2 +
  \ang_0\cdot I_3\Bigr)\,d\sigma_{\S^2}\,dx\,dt\]
where
\begin{align*}I_1 &= - \partial_t u \Bigl(f(x)\partial_x u + g(x) u\Bigr)
  a(x)^2,\\
I_2&=\frac{1}{2}\Bigl((\partial_t u)^2 + (\partial_x u)^2 -
     a(x)^{-2}|\ang_0 u|^2\Bigr)f(x)a(x)^2 + u\partial_x u g(x) a(x)^2
     - \frac{1}{2}g'(x)u^2 a(x)^2,\\
I_3&=\Bigl(f(x)\partial_x u + g(x) u\Bigr)\ang_0 u.
\end{align*}

For $0<\delta\ll 1$, we 
set 
\[f(x)=\frac{x^2}{a(x)^2},\quad g(x) =
\frac{1}{2}a(x)^{-2}\frac{d}{dx}\Bigl(a(x)^2f(x)\Bigr) - \delta \frac{x^{1+2m}}{(1+x^{2m})^{2+\frac{1}{m}}}.\]
Then we record that, for $\delta$ sufficiently small, the coefficient of $(\partial_x u)^2$ satisfies
\begin{align*}f'(x)+g(x)-\frac{1}{2}a(x)^{-2}\frac{d}{dx}\Bigl(a(x)^2f(x)\Bigr) &=
\frac{2x}{(1+x^{2m})^{1+\frac{1}{m}}}
- \delta \frac{x^{1+2m}}{(1+x^{2m})^{2+\frac{1}{m}}}\\
&\gtrsim \frac{x}{(1+x^{2m})^{1+\frac{1}{m}}},
\end{align*}
the coefficient of $a(x)^{-2}|\ang_0 u|^2$ becomes
\begin{align*} f(x)\frac{a'(x)}{a(x)} + g(x) -\frac{1}{2}a(x)^{-2} \frac{d}{dx}
\Bigl(a(x)^2f(x)\Bigr) &= \frac{x^{1+2m}}{(1+x^{2m})^{1+\frac{1}{m}}}-
                         \delta
                         \frac{x^{1+2m}}{(1+x^{2m})^{2+\frac{1}{m}}} 
  \\
&\gtrsim \frac{x^{1+2m}}{(1+x^{2m})^{1+\frac{1}{m}}},
\end{align*}
the coefficient of $(\partial_t u)^2$ is
\[-g(x)+\frac{1}{2}a(x)^{-2}\frac{d}{dx}
\Bigl(a(x)^2 f(x)\Bigr) = 
\delta \frac{x^{1+2m}}{(1+x^{2m})^{2+\frac{1}{m}}},\]
and the coefficient of $u^2$ obeys
\begin{align*}-\frac{1}{2}a(x)^{-2}\frac{d}{dx}
[a(x)^2 g'(x)] &=
  \frac{2m\:x^{2m-1}}{(1+x^{2m})^{2+\frac{1}{m}}} + \delta
                        \frac{m(2m+1) x^{2m-1}
                        (x^{4m}-4x^{2m}+1)}{(1+x^{2m})^{4+\frac{1}{m}}}\\&\gtrsim
                                                                           \frac{x^{2m-1}}{(1+x^{2m})^{2+\frac{1}{m}}}  
\end{align*}
provided that $\delta>0$ is sufficiently small.
Moreover, we note that the boundary term, which is the last term in
\eqref{ibp}, is nonnegative.

Since $0\le f(x)\le 1$, the Schwarz inequality and
\eqref{energy} give
\[\Bigl| \int_{x\ge x_0} f(x) \partial_t u \partial_x u \,dV\Bigr| \lesssim
E[u](t) \le E[u](0) + \int_0^t\int_{x\ge x_0} |\Box_\g u||\partial u|\,dV\,dt\]
on any time slice.  Similarly, since $g(x)\lesssim a(x)^{-1}$, we have
\[\Bigl|\int_{x\ge x_0} g(x) u\partial_t u\, dV\Bigr|\lesssim
\Bigl(\int_{x\ge x_0} a(x)^{-2} u^2\, dV\Bigr)^{1/2}
(E[u](t))^{1/2}.\]
So upon establishing a Hardy-type inequality
\begin{equation}
  \label{hardy}
  \int_{x\ge x_0} a(x)^{-2} u^2\,dV \lesssim \int_{x\ge x_0}
  (\partial_x u)^2\, dV,
\end{equation}
we shall also have
\[\Bigl| \int_{x\ge x_0} g(x) u \partial_t u \,dV\Bigr| 
\lesssim E[u](0) + \int_0^t\int_{x\ge x_0} |\Box_\g u| |\partial u|\,dV\,dt\]
on every time slice.  In order to prove \eqref{hardy}, 
we simply note that $x \le a(x)$ and integrate by parts, while
    relying on the Dirichlet boundary conditions, to obtain
\begin{align*}\int_{x\ge x_0} a(x)^{-2} u^2 \,dV &= \int_{x\ge x_0} u^2 \frac{d}{dx}
x\,dx\,d\sigma\\ &\lesssim\int_{x\ge x_0} a(x)^{-1} |u \partial_x u| \,
dV\lesssim \|a(x)^{-1} u\|_{L^2} \|\partial_x u\|_{L^2}.
\end{align*}

Using the bounds from below for each of the coefficients in
\eqref{ibp} and the above estimation of the time-boundary terms, a
local energy estimate of the form
\begin{multline}\label{LElocal}
\int_0^T \int_{x\ge x_0} \Bigl(x^{-2m-1} (\partial_x u)^2 + x^{-1} a(x)^{-2}
|\ang_0 u|^2 + x^{-2m -1}(\partial_tu)^2 + x^{-2m-3} u^2\Bigr)
\,dV\,dt\\\lesssim E[u](0) + \int_0^T\int_{x\ge x_0} |\Box_\g u|\Bigl(|\partial u|+a(x)^{-1}|u|\Bigr)\,dV\,dt
\end{multline}
follows, though it does not have the sharp
weights as $x\to \infty$.  

To get the estimate as stated, we shall
pair \eqref{LElocal} with \cite[Proposition 2.3]{BCMP}, which showed 
\begin{equation}
\label{BCMPestimate}
\|u\|^2_{LE^1_{x>R}[0,T]} \lesssim E[u](0) + \int_0^T\int_{x\ge x_0} |\Box_\g
u|\Bigl(|\partial u|+ a(x)^{-1}|u|\Bigr)\,dV\,dt+ R^{-2}
\|u\|^2_{LE_{x\approx R}[0,T]} 
\end{equation}
provided $R$ is sufficiently large.  Here, e.g., $LE^1_{x>R}[0,T]$
denotes the $LE^1[0,T]$ norm restricted to the set $\{(t,x,\omega)\,:\,
0\le t\le T, \,
x>R\}$.  In order to prove \eqref{BCMPestimate}, we again use
\eqref{ibp} but this time with
\[f(x)=\Bigl(1-\beta\Bigl(\frac{x}{R}\Bigr)\Bigr)\frac{x}{x+\rho},\quad
  g(x)=\frac{1}{2}a(x)^{-2}\frac{x}{x+\rho}\frac{d}{dx}\Bigl[\Bigl(1-\beta\Bigl(\frac{x}{R}\Bigr)\Bigr)a(x)^2\Bigr],\quad
  \rho\ge R\]
where $\beta(\rho)$ is a monotonically decreasing cutoff that is
$\equiv 1$ for $\rho<1/2$ and vanishes for $\rho>1$.  See \cite{BCMP}
for more details.

Due to \eqref{BCMPestimate}, it suffices to control $\|u\|^2_{LE^1_{x_0\le x\le R}}$
for which the weights at infinity are irrelevant and \eqref{LElocal}
suffices.  Applying the Schwarz inequality to the forcing term and
bootstrapping completes the proof.

\newsection{Proof of Theorem~\ref{NegativeThm} and Theorem~\ref{NegativeLEThm}: Stable trapping and the
  nonexistence of integrable local energy decay}  Here we shall need
the following sequence of exponentially good quasimodes.
\begin{proposition}
  \label{prop_quasimode}
There is a constant $c>0$, a sequence of positive numbers $\tau_j\to
+\infty$, and functions $v_j\in C^\infty(M_{x_0})$ such that $\supp\,
v_j\subseteq \{x_0\le x<0\}$, $\|v_j\|_{L^2(M_{x_0})}=1$, $v_j|_{x=x_0}=0$, and for each
  $k=0,1,2,\dots$ there exists $C_k>0$ so that
  \begin{equation}
    \label{quasimode}
    \|(-\Delta_{\g} -\tau_j^2)v_j\|_{H^k(M_{x_0})} \le C_k e^{-c\tau_j}.
  \end{equation}
\end{proposition}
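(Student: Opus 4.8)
The plan is to separate variables on $\S^2$ and reduce to a one–dimensional radial problem, then build the quasimodes from the ground state of the radial operator, exploiting the fact that the Dirichlet wall at $x=x_0$ together with the increasing potential $\ell(\ell+1)/a(x)^2$ on $[x_0,0]$ creates an effective potential well. Precisely, if $Y_\ell\in C^\infty(\S^2)$ satisfies $-\angdelta_{\S^2}Y_\ell=\ell(\ell+1)Y_\ell$ and $\|Y_\ell\|_{L^2(\S^2)}=1$, then $-\Delta_\g\bigl(\psi(x)Y_\ell(\omega)\bigr)=(\mathcal L_\ell\psi)(x)\,Y_\ell(\omega)$, where
\[\mathcal L_\ell\psi=-a(x)^{-2}\frac{d}{dx}\Bigl(a(x)^2\frac{d\psi}{dx}\Bigr)+\frac{\ell(\ell+1)}{a(x)^2}\,\psi\]
is self-adjoint on $L^2\bigl([x_0,\infty),a(x)^2\,dx\bigr)$. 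Since $a$ is even with $a(0)=1=\min a$, the potential $\ell(\ell+1)/a(x)^2$ increases strictly on $[x_0,0]$ from $\ell(\ell+1)/a(x_0)^2$ to $\ell(\ell+1)$; on $[x_0,0]$ the Dirichlet condition at $x_0$ thus plays the role of the left wall of a well whose right side is the barrier $(x_0,0)$, which is classically forbidden for the relevant energies all the way out to $x=0$.

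Fix a point $b\in(x_0,0)$. For each $\ell\in\N$ let $\tau_\ell^2$ be the lowest Dirichlet eigenvalue of $\mathcal L_\ell$ on $[x_0,b]$ and $\psi_\ell$ the corresponding eigenfunction, normalized by $\int_{x_0}^b|\psi_\ell|^2a^2\,dx=1$; put $\ell_j=j$, $\tau_j=\tau_{\ell_j}$, $\psi_j=\psi_{\ell_j}$. Standard Sturm--Liouville theory gives that $\psi_j$ is smooth up to $x_0$ and $b$, real, and vanishes at both endpoints. Testing $\mathcal L_{\ell_j}$ against a bump of width $\ell_j^{-1/2}$ supported in $[x_0,x_0+\ell_j^{-1/2}]$, where $a^{-2}\approx a(x_0)^{-2}$, and comparing with the obvious lower bound $\mathcal L_\ell\ge\min_{[x_0,b]}\ell(\ell+1)a^{-2}=\ell(\ell+1)a(x_0)^{-2}$, one finds
\[\frac{\ell_j(\ell_j+1)}{a(x_0)^2}\le\tau_j^2\le\frac{\ell_j(\ell_j+1)}{a(x_0)^2}+O(\ell_j),\]
so $\tau_j\to+\infty$, $\tau_j\asymp\ell_j$, and $\tau_j^2/(\ell_j(\ell_j+1))\to a(x_0)^{-2}<1$.

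The heart of the argument is an Agmon estimate showing $\psi_j$ is exponentially (in $\ell_j$) concentrated near $x_0$. Fix $b'\in(x_0,b)$: since $\ell(\ell+1)/a(x)^2\ge\ell(\ell+1)/a(b')^2$ on $[b',b]$ while $a(b')^{-2}>a(x_0)^{-2}=\lim\tau_j^2/(\ell_j(\ell_j+1))$, there is $c_0>0$ with $\ell_j(\ell_j+1)/a(x)^2-\tau_j^2\ge c_0\ell_j(\ell_j+1)$ on $[b',b]$ for all large $j$. Inserting a weight $e^{2\varphi}$ with $\varphi\equiv0$ on $[x_0,b']$ and $0\le(\varphi')^2\le\tfrac12 c_0\ell_j(\ell_j+1)$ on $[b',b]$ into the identity $\int e^{2\varphi}\psi_j(\mathcal L_{\ell_j}-\tau_j^2)\psi_j\,a^2\,dx=0$ and integrating by parts (the boundary terms vanish since $\psi_j(x_0)=\psi_j(b)=0$), the positivity of the weighted potential on $[b',b]$ together with $\int|\psi_j|^2a^2=1$ yields, for any $b''\in(b',b)$,
\[\int_{b''}^b|\psi_j|^2a^2\,dx\le C e^{-c\ell_j},\]
and then, differentiating $\mathcal L_{\ell_j}\psi_j=\tau_j^2\psi_j$ and using one-dimensional Sobolev embedding, $\|\psi_j\|_{C^k([b'',b])}\le C_ke^{-c\ell_j}$ for every $k$ (the powers of $\ell_j$ produced at each differentiation are absorbed into the exponential).

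Finally, choose $\chi\in C^\infty(\R)$ with $\chi\equiv1$ on $(-\infty,b'']$, $\chi\equiv0$ on $[b,\infty)$, and set $v_j=N_j^{-1}\chi(x)\psi_j(x)Y_{\ell_j}(\omega)$ with $N_j=\|\chi\psi_j\|_{L^2(a^2dx)}=1+O(e^{-c\ell_j})$. Then $v_j\in C^\infty(M_{x_0})$, $\supp v_j\subseteq\{x_0\le x\le b\}\subseteq\{x_0\le x<0\}$, $v_j|_{x=x_0}=0$, and $\|v_j\|_{L^2(M_{x_0})}=1$. Because $\psi_j$ solves $(\mathcal L_{\ell_j}-\tau_j^2)\psi_j=0$ on $[x_0,b]\supseteq\supp\chi$, only the commutator survives:
\[(-\Delta_\g-\tau_j^2)v_j=N_j^{-1}Y_{\ell_j}\,[\mathcal L_{\ell_j},\chi]\psi_j=N_j^{-1}Y_{\ell_j}\Bigl(-\chi''\psi_j-2\chi'\psi_j'-2\tfrac{a'}{a}\chi'\psi_j\Bigr),\]
which is supported in $(b'',b)$; the $C^k$ bounds on $\psi_j$ there give $\|[\mathcal L_{\ell_j},\chi]\psi_j\|_{H^k}\le C_ke^{-c\ell_j}$, and multiplying by $Y_{\ell_j}$ (which costs only $\|Y_{\ell_j}\|_{H^l(\S^2)}\lesssim\ell_j^l$) and using $\ell_j\asymp\tau_j$ gives $\|(-\Delta_\g-\tau_j^2)v_j\|_{H^k(M_{x_0})}\le C_ke^{-c\tau_j}$, i.e.\ \eqref{quasimode}. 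The main obstacle is the Agmon step: one must verify carefully that, since $(x_0,0)$ is classically forbidden at energies $\approx\ell_j(\ell_j+1)/a(x_0)^2$, the ground states decay like $e^{-c\ell_j}$ once $x$ is bounded away from $x_0$; the remainder is bookkeeping.
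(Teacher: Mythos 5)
Your proposal is correct and takes essentially the same route as the paper: separate variables, take the ground state of the radial Dirichlet problem whose well is created by the wall at $x=x_0$ and the monotone effective potential $\ell(\ell+1)a(x)^{-2}$ (the paper works on the full interval $(x_0,0)$ after conjugating by $a(x)$, you work on $(x_0,b)$ in divergence form), prove Agmon-type exponential smallness away from $x_0$, then cut off and estimate the commutator, absorbing polynomial factors in $\ell_j\asymp\tau_j$ into the exponential. One minor quantitative slip: a bump of width $\ell_j^{-1/2}$ gives $\tau_j^2\le \ell_j(\ell_j+1)a(x_0)^{-2}+O(\ell_j^{3/2})$ rather than $O(\ell_j)$, but this is immaterial since all that is used downstream is that $\tau_j^2/(\ell_j(\ell_j+1))$ stays strictly below $a(b')^{-2}$ by a margin of order one.
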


Before we proceed to proving the Proposition, we shall first show how
these quasimodes can be used to complete the proof of
Theorem~\ref{NegativeThm}.  These arguments are akin to those of
\cite{Ralston}, \cite{HS}, and others.

Let 
\[u_{0,j}(x,\omega) := v_j(x,\omega),\quad
  u_{1,j}(x,\omega):=-i\tau_j v_j(x,\omega),\]
\[U_j(t) = (U_{0,j}(t), U_{1,j}(t)) :=
  e^{-it\tau_j}(v_j,-i\tau_j v_j)\in H_{x_0}.\]
It follows immediately from the definition of $\|\,\cdot\,\|_{D(B^k)}$
and Proposition~\ref{prop_quasimode} that for each $k\in \N$, there is
$C_k>0$ so that for all $j\in \N$
\begin{equation}
  \label{3.1}
  \|(u_{0,j},u_{1,j})\|_{D(B^k)} =\|(v_j, -i\tau_j v_j)\|_{D(B^k)} \le
  C_k \tau^k_j \|(v_j,-i\tau_j v_j)\|_{H_{x_0}}.
\end{equation}

One can check that $U_j$ solves the inhomogeneous wave equation
\[
  \begin{cases}
    \partial_t U_j + iBU_j = F_j :=(0,
    (-\Delta_{\g}-\tau_j^2)v_j),\quad \text{ on } \R_+\times
    M_{x_0},\\
U_j(0)=(v_j, -i\tau_j v_j).
  \end{cases}
\]
Next let
\[\tilde{U}_j(t)=(\tilde{U}_{0,j}(t),\tilde{U}_{1,j}(t)) :=
  e^{-itB}(v_j,-i\tau_j v_j)\]
be the solution to the homogeneous equation where $F_j=0$.  Note that
if $u_j$ solves \eqref{bvp} with $u_0=u_{0,j}, u_1=u_{1,j}$, and
$F\equiv 0$, then
$\tilde{U}_{0,j}=u_j$, $\tilde{U}_{1,j}=\partial_t u_j$.  By
Duhamel's principle, the relationship between $U_j$ and $\tilde{U}_j$
is
\[U_j(t) = \tilde{U}_j(t) + \int_0^t e^{-i(t-s)B} F_j\,ds.\]
Using \eqref{quasimode}, we estimate
\begin{align*}
 \Bigl(\int_{x_0\le x\le R} |\nabla_\g (U_{0,j}-\tilde{U}_{0,j})(t)|^2 +
  |(U_{1,j}-\tilde{U}_{1,j})(t)|^2\,dV\Bigr)^{1/2}&\le \Bigl\|\int_0^t
                                                    e^{-i(t-s)B}F_j\,ds\Bigr\|_{H_{x_0}}\\
&\le t\|F_j\|_{H_{x_0}} \le C_0 te^{-c\tau_j}.
\end{align*}
Let $J\in \N$ be sufficiently large that $\tau_j\ge 1$ for $j\ge J$.
Then for any $t\in [0,t_j]$ where
\[t_j:=\frac{1}{2C_0} e^{c\tau_j} \le \frac{1}{2C_0}e^{c\tau_j} \|(v_j,-i\tau_j v_j)\|_{H_{x_0}},\]
it holds that
\begin{align*}
  E^{1/2}_R[u_j](t)&\ge \|U_j(t)\|_{H_{x_0}} - \Bigl(\int_{x_0\le x\le R}
  |\nabla_\g (U_{0,j}-\tilde{U}_{0,j})(t)|^2 +
  |(U_{1,j}-\tilde{U}_{1,j})(t)|^2\,dV\Bigr)^{1/2}\\
&\ge \|(v_j, -i\tau_j v_j)\|_{H_{x_0}} - C_0t e^{-c\tau_j}\\
&\ge \frac{1}{2} \|(v_j,-i\tau_j v_j)\|_{H_{x_0}}\\
&\ge \frac{1}{2C_k \tau_j^k} \|(v_j,-i\tau_j v_j)\|_{D(B^k)}.
\end{align*}
In the last step, we have used \eqref{3.1}.  We have thus shown
\[\frac{E^{1/2}_{R}[u_j](t)}{\|(u_{0,j},u_{1,j})\|_{D(B^k)}} \ge
    \frac{1}{2C_k\tau_j^k} = \frac{c^k}{2C_k} \log^{-k}(2C_0t_j),\quad
    t\in [0,t_j],\]
from which Theorem~\ref{NegativeThm} follows immediately.  

By integrating the above inequality, we also obtain
\begin{align*}
  \|u_j\|_{LE^1[0,t_j]} 
&\ge C_{x_0} \|E_0^{1/2}[u_j](t)\|_{L^2[0,t_j]} \\
&\ge C_{x_0,k} \frac{t_j^{1/2}}{\log^k(t_j)} \|(u_{0,j},u_{1,j})\|_{D(B^k)}.
\end{align*}
%
Since $\tau_j\to \infty$ as $j\to\infty$, given any $A>0$, we can select
$j$ sufficiently large so $C_{x_0,k} \frac{t_j^{1/2}}{\log^k(t_j)} >
A$, which proves Theorem~\ref{NegativeLEThm}.

\subsubsection{Proof of Proposition~\ref{prop_quasimode}:}
By expanding into spherical harmonics, we can write
\[a(x)(-\Delta_\g)a(x)^{-1} = \bigoplus_{j=0}^\infty
  \Bigl(-\frac{d^2}{dx^2} + \sigma_j^2 a(x)^{-2} + a''(x)a(x)^{-1}\Bigr)\]
where $0=\sigma_0 < \sigma_1\le
\sigma_2\le \dots$ are the square roots of the eigenvalues of the
Laplacian on $\S^2$, repeated according to multiplicity.  We set
\[V_j(x):=\sigma_j^2 a(x)^{-2}+a''(x)a(x)^{-1},\quad
  V_0:=a''(x)a(x)^{-1}.\]

We will show that there is a sequence $\tau_j\to +\infty$ so that we
have $u_j\in C_c^\infty([x_0,0))$, $\|u_j\|_{L^2(\R)}=1$, $u(x_0)=0$, and for each $k=0,1,2,\dots$
\begin{equation}
  \label{quasimode_separated}
  \Bigl(-\frac{d^2}{dx^2} + V_j(x)-\tau_j^2\Bigr)u_j(x)=\mathcal{O}_{H^k((x_0,0))}(e^{-c\tau_j}).
\end{equation}
This will imply \eqref{quasimode} with $v_j(x,\omega) =
a(x)^{-1}u_j(x) Y_j(\omega)$ where $Y_j$ is a spherical harmonic
with eigenvalue $\sigma_j^2$.

The first lemma fixes the sequence $\tau_j$.
\begin{lemma}
  \label{lemma_evalues}
For $j$ large enough, $V_j$ is strictly increasing on $[x_0,x_0/2]$,
we have $V_j(x_0/2) < V_j(0)$, and the operator $P:=-\frac{d^2}{dx^2} + V_j(x)$ on
$(x_0,0)$ with Dirichlet boundary conditions has an eigenvalue
$\tau_j^2\in [V_j(x_0), V_j(x_0/2)]$.
\end{lemma}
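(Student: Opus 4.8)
Write $b(x) = a(x)^{-2} = (x^{2m}+1)^{-1/m}$ and $w(x) = a''(x)a(x)^{-1}$, so that $V_j = \sigma_j^2 b + w$. Here $w$ and all of its derivatives are bounded on the compact interval $[x_0,0]$ independently of $j$, whereas $b$ is smooth with
\[b'(x) = -\frac{2x^{2m-1}}{(x^{2m}+1)^{1+1/m}} > 0 \quad (x<0),\]
so $b$ is strictly increasing on $[x_0,0]$ and $b' \ge c_0 > 0$ on $[x_0,x_0/2]$; recall also that $\sigma_j \to +\infty$. The first two assertions of the lemma are then immediate: on $[x_0,x_0/2]$ we have $V_j' = \sigma_j^2 b' + w' \ge \sigma_j^2 c_0 - \|w'\|_{L^\infty([x_0,0])} > 0$ for $j$ large, giving strict monotonicity there, and $V_j(0) - V_j(x_0/2) = \sigma_j^2\bigl(1 - b(x_0/2)\bigr) + \bigl(w(0) - w(x_0/2)\bigr) \to +\infty$ since $b(0) = 1 > b(x_0/2)$.

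For the eigenvalue I would use the variational characterization of the spectrum of the regular Sturm--Liouville operator $P$ on the bounded interval $(x_0,0)$ with Dirichlet conditions, whose spectrum is discrete and bounded below. First, for $j$ large the monotonicity upgrades to the pointwise bound $V_j \ge V_j(x_0)$ on all of $[x_0,0]$: it holds on $[x_0,x_0/2]$ by monotonicity, and on $[x_0/2,0]$ because $b(x) - b(x_0) \ge b(x_0/2) - b(x_0) =: \delta_0 > 0$ there, whence $V_j(x) - V_j(x_0) \ge \sigma_j^2\delta_0 - 2\|w\|_{L^\infty([x_0,0])} \ge 0$ for $j$ large. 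Consequently $P \ge V_j(x_0)$ as a quadratic form on $H_0^1((x_0,0))$, so every eigenvalue of $P$ — in particular its bottom one $\lambda_1(P)$ — is $\ge V_j(x_0)$.

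The crux is the matching upper bound $\lambda_1(P) \le V_j(x_0/2)$, which exploits that the well of $V_j$ at its minimum $x = x_0$ is shallow: over $[x_0, x_0+\ell]$ the potential rises by only $\int_{x_0}^{x_0+\ell} V_j' = O(\sigma_j^2\ell)$. I would therefore run min--max against a trial function localized near $x_0$ on a shrinking scale $\ell_j$, say $\phi_j(x) = \sin\!\bigl(\pi(x-x_0)/\ell_j\bigr)$ on $[x_0,x_0+\ell_j]$ extended by zero, which lies in $H_0^1((x_0,0))$ and is supported in $[x_0,x_0/2]$ once $\ell_j < |x_0|/2$. A scaling computation gives Rayleigh quotient $\le \pi^2\ell_j^{-2} + \max_{[x_0,x_0+\ell_j]} V_j \le V_j(x_0) + \pi^2\ell_j^{-2} + C\sigma_j^2\ell_j$. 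Balancing the two error terms forces the intermediate scale $\ell_j = \sigma_j^{-2/3}$, yielding $\lambda_1(P) \le V_j(x_0) + C'\sigma_j^{4/3}$. Since $V_j(x_0/2) - V_j(x_0) = \sigma_j^2\delta_0 + O(1) \ge \tfrac12\delta_0\sigma_j^2$ for $j$ large and $\sigma_j^{4/3} = o(\sigma_j^2)$, we get $\lambda_1(P) < V_j(x_0/2)$ for $j$ large. Setting $\tau_j^2 := \lambda_1(P) \in [V_j(x_0), V_j(x_0/2)]$ completes the proof; note $\tau_j^2 \ge V_j(x_0) = \sigma_j^2 b(x_0) + w(x_0) \to +\infty$.

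The only delicate point is this last balance: one needs both the localization cost $\ell_j^{-2}$ and the slope cost $\sigma_j^2\ell_j$ to be $o(\sigma_j^2) \sim V_j(x_0/2) - V_j(x_0)$, which is precisely why the shallowness of the well — the $O(\sigma_j^2\ell)$, rather than $O(\sigma_j^2)$, estimate for the potential increment — is essential. Everything else reduces to the observation that $V_j = \sigma_j^2 b + w$ with $b$ strictly increasing up to and including $x_0$, so that $\sigma_j^2$ dominates the fixed bounded terms for $j$ large.
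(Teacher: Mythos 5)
Your proof is correct and follows essentially the same route as the paper: the monotonicity of $\sigma_j^2 a^{-2}$ dominating the bounded term $a''a^{-1}$ gives the first two claims and the pointwise bound $V_j\ge V_j(x_0)$ (hence the lower bound on the bottom Dirichlet eigenvalue), and the upper bound comes from the variational principle with a test function supported near $x=x_0$. The only difference is in the choice of trial function: the paper compares with a fixed-width infinite square well on $(x_0,3x_0/4)$, getting $\tau_j^2\le V_j(3x_0/4)+16\pi^2 x_0^{-2}\le V_j(x_0/2)$ for large $j$, whereas your shrinking well at scale $\ell_j=\sigma_j^{-2/3}$ yields the sharper (but not needed) bound $V_j(x_0)+O(\sigma_j^{4/3})$.
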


\begin{proof} 
To prove that, for $j$ large enough, we have $V_j$ strictly increasing
on $[x_0, x_0/2]$ and $V_j(x_0/2)<V_j(0)$, it suffices to use the fact
that $\sigma_j^2 a(x)^{-2}$ is strictly increasing on $[x_0,0]$ when
$j>0$ and that $\sigma_j\to \infty$ as $j\to \infty$.

Let
$I=(x_0,0)$, let $\mathcal{D}=H^1_0(I)\cap H^2(I)$ 
be the domain of $P$, and
 let $\tau_j^2$ be the first eigenvalue of $P$.
We prove the upper bound on $\tau_j^2$ by comparing the bottom of the spectrum of
$P$ with the bottom of the spectrum of an explicit infinite
square well, whose first eigenvalue can be directly calculated.  Let
\[P_+ = -\frac{d^2}{dx^2} + V_j(3x_0/4),\quad \text{have domain }
  \mathcal{D}_+=H^1_0(I_+)\cap H^2(I_+)\]
where $I_+=(x_0,3x_0/4)$.
  We then have
 \begin{align*}
   \tau^2_j = \inf_{u\in \mathcal{D}} \frac{\la Pu,
   u\ra_{L^2(I)}}{\|u\|^2_{L^2(I)}} &\le \inf_{u\in C^\infty_0(I_+)}                                                   
 \frac{\la Pu,
  u\ra_{L^2(I)}}{\|u\|^2_{L^2(I)}}
\\
 &\le \inf_{u\in C^\infty_0(I_+)} \frac{\la P_+u,
  u\ra_{L^2(I_+)}}{\|u\|^2_{L^2(I_+)}}\\
&=V_j(3x_0/4) + \inf_{u\in C^\infty_0(I_+)}
  \frac{\|u'\|^2_{L^2(I_+)}}{\|u\|^2_{L^2(I_+)}} = V_j(3x_0/4)+16\pi^2 x_0^{-2},
 \end{align*}
which is bounded by $V_j(x_0/2)$ for $j$ large enough.  The last
equality follows by computing the smallest constant $\rho$ so
$u''+\rho u=0$ has a nontrivial solution with $u(x_0)=u(3x_0/4)=0$.

To prove the remaining lower bound, we observe that
\[\inf_{u\in \mathcal{D}} \frac{\la
    Pu,u\ra_{L^2(I)}}{\|u\|^2_{L^2(I)}} \ge V_j(x_0).\]\end{proof}

Fix $\chi\in C^\infty([x_0,0]; [0,1])$ with $\chi(x)\equiv 1$ on a
neighborhood of $[x_0,x_0/2]$ and $\chi(x)\equiv 0$ on a neighborhood
of $0$.  Let $\psi_j\in C^\infty(I)$, $\psi_j(x_0)=\psi_j(0)=0$ be
an eigenfunction associated to the eigenvalue $\tau_j^2$, as supplied
by Lemma~\ref{lemma_evalues}.  Thus,
\begin{equation}
  \label{2.3}
  \Bigl(-\frac{d^2}{dx^2} + V_j(x)-\tau^2_j\Bigr)\psi_j = 0,\quad \tau_j^2 \in
  [V_j(x_0), V_j(x_0/2)].
\end{equation}
We define the quasimodes to be
\[u_j(x) = \chi(x) \psi_j(x) / \|\chi \psi_j\|_{L^2}.\]

In order to prove \eqref{quasimode_separated}, we will prove the
following Agmon estimate, which is a variant of the standard
semiclassical Agmon estimate as in \cite[Section 7.1]{Zworski}.
\begin{lemma}\label{lemma_agmon}
 There exists a constant $c$ so that, for $j$ large enough,
 \begin{equation}
   \label{agmon}
   \|\mathbf{1}_{\emph{supp}\,(1-\chi)} \psi_j\|_{L^2([x_0,0])} \le
   e^{-c\sigma_j} \|\psi_j\|_{L^2([x_0,0])}.
 \end{equation}
\end{lemma}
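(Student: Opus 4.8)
The plan is to prove \eqref{agmon} as a one-dimensional semiclassical Agmon (Carleman) estimate with small parameter $h_j:=\sigma_j^{-1}$, following the scheme in \cite[Section 7.1]{Zworski}. Dividing \eqref{2.3} by $\sigma_j^2$, and taking $\psi_j$ real (as we may), $\psi_j$ solves
\[
\Bigl(-h_j^2\tfrac{d^2}{dx^2}+W_j(x)-\mu_j^2\Bigr)\psi_j=0,\qquad
W_j:=a(x)^{-2}+h_j^2\,a''(x)a(x)^{-1},\qquad \mu_j^2:=h_j^2\tau_j^2 .
\]
Here $a(x)^{-2}=(x^{2m}+1)^{-1/m}$ is strictly increasing on $[x_0,0]$, $a''a^{-1}$ is a fixed bounded function there, and Lemma~\ref{lemma_evalues} gives $V_j(x_0)\le\tau_j^2\le V_j(x_0/2)$, hence $\mu_j^2\le a(x_0/2)^{-2}+Ch_j^2$ and $\mu_j^2\lesssim 1$. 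Since $\chi\equiv 1$ on a neighborhood of $[x_0,x_0/2]$, fix once and for all a point $x_1\in(x_0/2,0)$ lying strictly to the left of $\supp(1-\chi)$. Then for all large $j$ and all $x\in[x_1,0]$,
\[
W_j(x)-\mu_j^2\ \ge\ a(x_1)^{-2}-a(x_0/2)^{-2}-2Ch_j^2\ \ge\ \tfrac12\kappa,\qquad
\kappa:=a(x_1)^{-2}-a(x_0/2)^{-2}>0 ;
\]
i.e.\ $\supp(1-\chi)$ lies in the classically forbidden region, with a barrier height bounded below uniformly in $j$.

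Next I would introduce the Lipschitz weight $\phi_j$ with $\phi_j\equiv 0$ on $[x_0,x_1]$ and $\phi_j'=\tfrac12\sqrt{W_j-\mu_j^2}$ on $[x_1,0]$, so $(\phi_j')^2\le\tfrac14(W_j-\mu_j^2)$ on $[x_1,0]$, while on $[x_0,x_1]$ we still have $W_j-\mu_j^2\gtrsim -1$; and, since $\phi_j$ grows at the fixed positive rate $\tfrac12\sqrt{\kappa/2}$ across the interval of fixed positive length separating $x_1$ from $\supp(1-\chi)$, there is $c_0>0$ independent of $j$ with $\phi_j\ge c_0$ on $\supp(1-\chi)$. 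Setting $w_j:=e^{\phi_j/h_j}\psi_j$, which lies in $H^1_0((x_0,0))$ because $\psi_j$ does and $\phi_j$ is bounded and Lipschitz, I substitute $\psi_j=e^{-\phi_j/h_j}w_j$ into the equation, pair with $e^{\phi_j/h_j}w_j$, and integrate by parts; the Dirichlet conditions kill all boundary terms and the two terms involving $\phi_j''$ cancel, so Lipschitz regularity of $\phi_j$ suffices to obtain
\[
h_j^2\int_{x_0}^0|w_j'|^2\,dx+\int_{x_0}^0\bigl(W_j-\mu_j^2-(\phi_j')^2\bigr)|w_j|^2\,dx=0 .
\]
On $[x_0,x_1]$ we have $w_j=\psi_j$ and the integrand is $\ge -C|\psi_j|^2$; on $[x_1,0]$ it is $\ge\tfrac34(W_j-\mu_j^2)|w_j|^2\ge\tfrac38\kappa\,|w_j|^2$. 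Discarding the nonnegative gradient term gives $\int_{x_1}^0 e^{2\phi_j/h_j}|\psi_j|^2\,dx\lesssim\|\psi_j\|_{L^2([x_0,0])}^2$, and restricting to $\supp(1-\chi)$, where $e^{2\phi_j/h_j}\ge e^{2c_0/h_j}=e^{2c_0\sigma_j}$, yields $\|\mathbf 1_{\supp(1-\chi)}\psi_j\|_{L^2([x_0,0])}\lesssim e^{-c_0\sigma_j}\|\psi_j\|_{L^2([x_0,0])}$; absorbing the implied constant for $j$ large proves \eqref{agmon} with $c=c_0/2$.

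The only delicate point is uniformity in $j$: the potentials $W_j$ and the energies $\mu_j^2$ both move with $j$, and $\mu_j^2$ need not converge, so one must check that the barrier height $\kappa$, the coercivity $W_j-\mu_j^2-(\phi_j')^2\gtrsim\kappa$ on $[x_1,0]$, and the weight gain $c_0$ are all bounded below independently of $j$. That is exactly what the strict monotonicity of $a^{-2}$ on $[x_0,0]$ together with the two-sided bound $V_j(x_0)\le\tau_j^2\le V_j(x_0/2)$ from Lemma~\ref{lemma_evalues} provides; everything else is the textbook Agmon computation.
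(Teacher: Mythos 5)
Your proof is correct, and it reaches \eqref{agmon} by the same Agmon-type mechanism as the paper but with a cleaner implementation. The two structural inputs are identical in both arguments: strict monotonicity of $a^{-2}$ on $[x_0,0]$ (from $a'<0$ for $x<0$), which gives the uniform barrier $W_j-\mu_j^2\ge \kappa/2$ on $[x_1,0]$, and the upper bound $\tau_j^2\le V_j(x_0/2)$ from Lemma~\ref{lemma_evalues}; your uniformity discussion handles the $j$-dependence of $W_j$ and $\mu_j^2$ correctly, and the weighted identity you use is valid for a Lipschitz weight with Dirichlet data (pairing the equation with $e^{2\phi_j/h_j}\psi_j$ one never needs $\phi_j''$ at all), so the conclusion with $c=c_0/2$ follows. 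Where you differ from the paper: you rescale semiclassically with $h_j=\sigma_j^{-1}$ and take a Lipschitz weight proportional to the Agmon distance, vanishing on the classically allowed side, and apply the exact quadratic-form identity directly to the eigenfunction; the paper instead conjugates by $e^{\delta\sigma_j\varphi}$ with a fixed smooth cutoff $\varphi$ and a second cutoff $\chi_0$, so that $w=\chi_0e^{\delta\sigma_j\varphi}\psi_j$ is only an approximate solution of the conjugated equation, and the resulting commutator terms must be controlled by the elliptic estimate \eqref{2.12}, yielding the polynomial factor $\sigma_j^4$ which is then absorbed into the exponential. Your route buys economy: no cutoff $\chi_0$, no commutators, no derivative bounds on $\psi_j$, and no polynomial loss. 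The paper's route buys robustness: the weight $\delta\sigma_j\varphi$ with a fixed cutoff needs no $j$-dependent Agmon distance, and the cutoff-plus-commutator scheme extends unchanged to cases where $\psi_j$ solves the equation only approximately; for this lemma, either argument suffices.
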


\begin{proof}
 Let $\varphi\in C^\infty([x_0,0])$ such that $\varphi \equiv 0$ on
 a neighborhood of $[x_0, x_0/2]$ and $\varphi\equiv 1$ on a neighborhood of $\supp\,
 (1-\chi)$.  
We then fix $\chi_0\in C^\infty([x_0,0])$ with
$\chi_0\equiv 0$ on a neighborhood of $[x_0, x_0/2]$ and $\chi_0\equiv 1$ on a neighborhood of
$\supp\,\varphi$.  Define
\begin{equation}
  \label{2.6}
  w:=\chi_0 e^{\delta \sigma_j \varphi} \psi_j,\quad \delta>0 \text{
    to be chosen later}
\end{equation}
and
\begin{equation}
  \label{2.7}
  \begin{split}
    P_\varphi:&= e^{\delta \sigma_j \varphi}\Bigl(-\frac{d^2}{dx^2} +
    V_j(x)-\tau_j^2\Bigr)e^{-\delta\sigma_j \varphi}\\
&=e^{\delta \sigma_j \varphi} \Bigl(-\frac{d^2}{dx^2} + \sigma_j^2
a(x)^{-2}+V_0(x)-\tau_j^2\Bigr)e^{-\delta\sigma_j\varphi}\\
&=-\frac{d^2}{dx^2} + 2\delta\sigma_j \varphi'\frac{d}{dx} + \sigma_j^2
a(x)^{-2} - \delta^2\sigma_j^2 (\varphi')^2 + \delta\sigma_j \varphi''
+ V_0(x) -\tau_j^2.
  \end{split}
\end{equation}

Using $\Re 2\delta \sigma_j \la \varphi' w', w\ra = -\delta \sigma_j
\la \varphi'' w,w\ra$, we compute
\begin{equation}  \label{2.8}
\begin{split}
    \Re\la P_\varphi w,w\ra_{L^2} &= \|w'\|^2_{L^2} + \Re
    2\delta\sigma_j\la \varphi' w',w\ra_{L^2} \\&\qquad\qquad\qquad+ \la
    (\sigma_j^2(a^{-2}-\delta^2(\varphi')^2) + \delta\sigma_j\varphi''
    + V_0 - \tau_j^2)w,w\ra_{L^2} \\
&=\|w'\|^2_{L^2} + \la (\sigma_j^2(a^{-2}-\delta^2(\varphi')^2)+V_0-\tau_j^2)w,w\ra_{L^2}.
  \end{split}  
\end{equation}
Here and in the sequel we have abbreviated $L^2([x_0,0])$ by $L^2$.
Since $\tau_j^2 \le |V_j(x_0/2)| \le \sigma_j^2 a(x_0/2)^{-2} + |V_0(x_0/2)|$,
we now estimate, on $\supp\,\chi_0$,
\begin{multline}
  \label{2.9}
  \sigma^2_j (a^{-2}-\delta^2(\varphi')^2) + V_0 - \tau_j^2 \ge
  \sigma_j^2\Bigl( \Bigl(\max_{\supp\,\chi_0} a^2\Bigr)^{-1} -
  a(x_0/2)^{-2} - \delta^2 \max_{\supp\,\chi_0} (\varphi')^2\Bigr) \\-
  \max_{\supp\,\chi_0} |V_0| - |V_0(x_0/2)|.
\end{multline}
Because $a'(x)<0$ for $x<0$ and $\supp\,\chi_0\subseteq (x_0/2,0]$, we
can fix $\delta>0$ small enough so that
\[\Bigl(\max_{\supp \,\chi_0} a^2\Bigr)^{-1} -
  a(x_0/2)^{-2} - \delta^2 \max_{\supp\,\chi_0} (\varphi')^2=: \alpha >0.\]
Then if $\sigma_j>1$ is sufficiently large, we can ensure that
\[\sigma_j^2(a^{-2}-\delta^2 (\varphi')^2) + V_0-\tau_j^2 \ge
  \frac{\alpha \sigma_j^2}{2},\quad \text{ on }\supp\,\chi_0.\]
Using this and the fact that $\sigma_j^2>1$ in \eqref{2.8} gives
\[\frac{\alpha}{2}\|w\|^2_{L^2} \le
  \frac{\alpha\sigma^2_j}{2}\|w\|_{L^2}^2 \le \|P_\varphi w\|_{L^2}
  \|w\|_{L^2} \le \frac{1}{\alpha} \|P_\varphi w\|^2_{L^2} +
  \frac{\alpha}{4}\|w\|^2_{L^2}.\]
Therefore
\begin{equation}
  \label{2.10}
  \|w\|^2_{L^2} \le \frac{4}{\alpha^2} \|P_\varphi w\|^2_{L^2}.
\end{equation}

We now use elliptic estimates to show 
\begin{equation}
  \label{2.11}
  \|P_\varphi w\|_{L^2} \lesssim \sigma_j^2 \|\psi_j\|_{L^2}.
\end{equation}
Integration by parts and \eqref{2.3} give
\begin{equation}
  \label{2.12}
  \begin{split}
    \|\psi_j ' \|^2_{L^2} &\le \frac{1}{2} \|\psi_j \|^2_{L^2} +
    \frac{1}{2} \|\psi_j''\|^2_{L^2}\\
&\le \Bigl(\frac{1}{2} + (\max_{x\in [x_0,0]}
V_j(x))^2\Bigr)\|\psi_j\|^2_{L^2}\\
 &\lesssim \sigma^4_j\|\psi_j\|^2_{L^2}.
 \end{split}
\end{equation}
By \eqref{2.3}, \eqref{2.6}, and \eqref{2.7},
\begin{equation}
  \label{2.13}
  \begin{split}
    P_\varphi w &= \chi_0 (P_\varphi e^{\delta \sigma_j\varphi}
    \psi_j) + [P_\varphi,\chi_0] e^{\delta\sigma_j\varphi}\psi_j\\
&=\chi_0 e^{\delta \sigma_j \varphi}\Bigl(-\frac{d^2}{dx^2} + V_j -
\tau_j^2\Bigr)\psi_j + [P_\varphi,\chi_0]e^{\delta\sigma_j
  \varphi}\psi_j\\
&=\Bigl[-\frac{d^2}{dx^2},\chi_0\Bigr]e^{\delta\sigma_j\varphi} \psi_j
+ \Bigl[2\delta\sigma_j \varphi'\frac{d}{dx},\chi_0\Bigr]e^{\delta\sigma_j\varphi}\psi_j.
  \end{split}
\end{equation}
Using the support properties of $\varphi$ and $\chi_0$, we get
\[\Bigl[-\frac{d^2}{dx^2},\chi_0\Bigr]e^{\delta\sigma_j\varphi} =
  \Bigl[-\frac{d^2}{dx^2},\chi_0\Bigr],\quad \Bigl[2\delta \sigma_j
  \varphi'\frac{d}{dx},\chi_0\Bigr]e^{\delta\sigma_j\varphi} = 0.\]
Since $[-\frac{d^2}{dx^2},\chi_0] = -\chi_0'' - 2\chi_0'
\frac{d}{dx}$, using the triangle inequality along with \eqref{2.12}
and \eqref{2.13} establishes \eqref{2.11}.

Since $\varphi \equiv \chi_0\equiv 1$ on $\supp\, (1-\chi)$,
\eqref{2.10} and \eqref{2.11} show
\[e^{2\delta\sigma_j} \int_{\supp\,(1-\chi)} |\psi_j|^2\,dx \le
  \|w\|^2_{L^2} \lesssim \sigma^4_j \|\psi_j\|^2_{L^2},\]
which implies the desired result \eqref{agmon}.
\end{proof}

We now complete the proof by establishing \eqref{quasimode_separated}:
\begin{proposition}\label{prop_quasimode_separated}
  There exists a constant $c$ so that for any $k=0,1,\dots$, we have
\[\Bigl\|\Bigl(-\frac{d^2}{dx^2}+V_j-\tau_j^2\Bigr)u_j\Bigr\|_{H^k((x_0,0))}\le
  C_k e^{-c\sigma_j}.\]
\end{proposition}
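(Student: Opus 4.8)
The plan is to compute the quasimode error of $u_j=\chi\psi_j/\|\chi\psi_j\|_{L^2}$ explicitly, to reduce the claim to an $H^{k+1}$ bound on $\psi_j$ on a neighborhood of $\supp\chi'$, and then to obtain that bound by feeding the $L^2$ Agmon estimate of Lemma~\ref{lemma_agmon} into an elliptic bootstrap in which only powers of $\sigma_j$ are lost. Since the eigenfunction $\psi_j$ is determined only up to a constant, I first normalize $\|\psi_j\|_{L^2([x_0,0])}=1$. Because $\chi\equiv1$ off $\supp(1-\chi)$, Lemma~\ref{lemma_agmon} gives $\|\chi\psi_j\|_{L^2}^2\ge 1-e^{-2c\sigma_j}\ge\tfrac12$ for $j$ large, so $u_j$ is well defined with $\|\chi\psi_j\|_{L^2}^{-1}\le\sqrt2$. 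Using $\bigl(-\tfrac{d^2}{dx^2}+V_j-\tau_j^2\bigr)\psi_j=0$ and the product rule,
\[
\Bigl(-\frac{d^2}{dx^2}+V_j-\tau_j^2\Bigr)u_j
=\|\chi\psi_j\|_{L^2}^{-1}\bigl(-\chi''\psi_j-2\chi'\psi_j'\bigr),
\]
whose right side is supported in $\supp\chi'$, a compact subset of the open interval $(x_0,0)$ since $\chi$ is locally constant near both endpoints of $[x_0,0]$. By the Leibniz rule it therefore suffices to prove, for a fixed compact $\Omega$ having $\supp\chi'$ in its interior,
\[
\|\psi_j\|_{H^{k+1}(\Omega)}\le C_k\,e^{-c\sigma_j}.
\]

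For the $L^2$ input I fix a compact $K\subseteq(x_0,0)$ having $\supp\chi'$ in its interior and lying in the region where the Agmon estimate produces decay. Since $\supp\chi'$ sits strictly to the right of $x_0/2$, while $\sigma_j^2 a(x)^{-2}$ — and hence $V_j$, for $j$ large — is increasing on compact subintervals of $[x_0,0)$, as used in the proof of Lemma~\ref{lemma_evalues}, I may apply Lemma~\ref{lemma_agmon} with $\chi$ replaced by a cutoff $\tilde\chi$ that is still $\equiv1$ near $[x_0,x_0/2]$ and $\equiv0$ near $0$ but for which $K\subseteq\supp(1-\tilde\chi)$; equivalently, one re-runs the proof of Lemma~\ref{lemma_agmon} with the auxiliary cutoffs $\varphi,\chi_0$ chosen supported in $(x_0/2,0]$ and $\equiv1$ on $K$. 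Either way one gets $\|\psi_j\|_{L^2(K)}\le C\,e^{-c\sigma_j}$, after shrinking $c$ if necessary.

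It remains to run the elliptic bootstrap. On $K$ one has $\|\partial_x^\ell(V_j-\tau_j^2)\|_{L^\infty(K)}\lesssim\sigma_j^2$ for every $\ell$, since $V_j=\sigma_j^2a(x)^{-2}+V_0$ with $a$ smooth and bounded below on $K$ and $\tau_j^2\le V_j(x_0/2)\lesssim\sigma_j^2$ by Lemma~\ref{lemma_evalues}. I claim $\|\psi_j\|_{H^n(\Omega)}\lesssim\sigma_j^{\,n}\|\psi_j\|_{L^2(K)}$ for every $n$ and every compact $\Omega$ inside the interior of $K$. For $n=1$ this is a Caccioppoli inequality: pairing $\psi_j''=(V_j-\tau_j^2)\psi_j$ with $\zeta^2\psi_j$, where $\zeta\in C^\infty_0$ is supported in the interior of $K$ with $\zeta\equiv1$ on $\Omega$, integrating by parts, and absorbing $\int\zeta^2|\psi_j'|^2$ gives $\|\zeta\psi_j'\|_{L^2}\lesssim\sigma_j\|\psi_j\|_{L^2(K)}$. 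For $n\ge2$ one differentiates the equation, $\psi_j^{(n+2)}=\sum_{\ell=0}^n\binom n\ell(V_j-\tau_j^2)^{(\ell)}\psi_j^{(n-\ell)}$, and argues by induction, each term costing a single extra factor $\sigma_j^2$. Combined with the previous paragraph,
\[
\|\psi_j\|_{H^{k+1}(\Omega)}\lesssim\sigma_j^{\,k+1}\|\psi_j\|_{L^2(K)}\lesssim\sigma_j^{\,k+1}e^{-c\sigma_j}\le C_k\,e^{-c'\sigma_j}
\]
for any fixed $c'<c$, since $\sigma_j\to\infty$; dividing by $\|\chi\psi_j\|_{L^2}\ge\tfrac1{\sqrt2}$ and relabeling $c$ then finishes the proof.

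The delicate step is the geometric one in the second paragraph: Lemma~\ref{lemma_agmon} as stated only bounds $\psi_j$ on $\supp(1-\chi)$, whose left endpoint coincides with the left endpoint of $\supp\chi'$, so there is no margin in which to run interior elliptic estimates about $\supp\chi'$; this forces the small enlargement of the Agmon region, which is harmless precisely because the classically forbidden region for $V_j$ extends past $x_0/2$. The remaining ingredients are routine once one checks that each stage of the bootstrap loses only a fixed power of $\sigma_j$, which the exponential gain from Lemma~\ref{lemma_agmon} then absorbs.
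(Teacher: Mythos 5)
Your proof is correct, and it follows the same overall strategy as the paper (write the error as the normalized commutator $\|\chi\psi_j\|_{L^2}^{-1}\bigl(-\chi''\psi_j-2\chi'\psi_j'\bigr)$, use the Agmon bound for the $L^2$ piece, and a Caccioppoli-type argument for the derivative terms), but you handle the localization step differently. The paper does not enlarge the Agmon region at all: in \eqref{2.5} the Caccioppoli weight is taken to be $\chi'$ itself, so the integration by parts (after substituting $\psi_j''=(V_j-\tau_j^2)\psi_j$) produces $\psi_j$ only on $\supp\chi'\subseteq\supp(1-\chi)$, and Lemma~\ref{lemma_agmon} applies verbatim; the higher Sobolev norms are then handled by iterating the same weighted integration by parts with $\chi'',\chi''',\dots$, all still supported in $\supp(1-\chi)$. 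So your remark that the absence of a margin around $\supp\chi'$ ``forces'' an enlargement of the Agmon region is too strong --- the $\chi'$-weighted estimate sidesteps it --- but your alternative is perfectly legitimate: the proof of Lemma~\ref{lemma_agmon} only uses that the target region is compactly contained in $(x_0/2,0]$, where $a^{-2}>a(x_0/2)^{-2}$ and $\tau_j^2\le V_j(x_0/2)$ make $V_j-\tau_j^2\gtrsim\sigma_j^2$, so re-applying it with a cutoff $\tilde\chi$ whose transition region covers a compact neighborhood $K$ of $\supp\chi'$ does yield $\|\psi_j\|_{L^2(K)}\lesssim e^{-c\sigma_j}\|\psi_j\|_{L^2}$, and your interior bootstrap losing only powers of $\sigma_j$ is then routine. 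What your route buys is a more standard and more explicit treatment of the higher-$H^k$ bounds (the paper disposes of them in one sentence), at the cost of re-running the Agmon argument with modified cutoffs; the paper's route buys economy, needing only the lemma as stated. One trivial slip: $\|\chi\psi_j\|_{L^2}^2\ge 1-e^{-2c\sigma_j}$ should be, say, $1-2e^{-2c\sigma_j}$ (or argue via $\|\chi\psi_j\|\ge\|\psi_j\|-\|(1-\chi)\psi_j\|$), which does not affect the conclusion $\|\chi\psi_j\|_{L^2}\ge 1/\sqrt2$ for large $j$.
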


\begin{proof}
  As \eqref{agmon} gives
\[\|(1-\chi)\psi_j\|_{L^2} \le e^{-c\sigma_j} \|\psi_j\|_{L^2}\implies
  \|\chi\psi_j\|_{L^2}\ge (1-e^{-c\sigma_j})\|\psi_j\|_{L^2},\]
it suffices to bound the norms of
$(-\frac{d^2}{dx^2}+V_j-\tau_j^2)(\chi \psi_j)$ in terms of
$\|\psi_j\|_{L^2}$.

Integrating by parts gives
\begin{align*}
2\|\chi' \psi_j'\|^2_{L^2} &\le -2\int_{x_0}^0 \Bigl(2 \chi' \chi''
                             \psi_j' +
                             (\chi')^2 \psi_j''\Bigr)\overline{\psi_j}\,dx\\
&\le \|\chi' \psi_j'\|^2_{L^2} + C\int_{\supp\,\chi'} |\psi_j|^2\,dx 
 + C\int_{\supp\,\chi'} \Bigl|V_j(x)-\tau_j^2\Bigr| |\psi_j|^2\,dx.
\end{align*}
Noting that $\supp\,\chi' \subseteq \supp (1-\chi)$, this yields
\begin{equation}
  \label{2.5}
 \|\chi' \psi_j'\|^2_{L^2} \lesssim \sigma_j^2 \int_{\supp\,(1-\chi)} |\psi_j|^2\,dx,
\end{equation}
provided $j$ is sufficiently large.
Using \eqref{2.5} along with \eqref{2.3} and \eqref{agmon}, we get
\begin{align*}
  \Bigl\|\Bigl(-\frac{d^2}{dx^2} + V_j - \tau_j^2\Bigr)\chi \psi_j\Bigr\|_{L^2}
  &= \Bigl\|\Bigl[-\frac{d^2}{dx^2},\chi\Bigr]\psi_j\Bigr\|_{L^2}\\
&\le\|\chi'' \psi_j\|_{L^2} + 2\|\chi' \psi_j'\|_{L^2}\\
&\le C \sigma_j \|\mathbf{1}_{\supp\, (1-\chi)} \psi_j\|_{L^2}\\
&\le e^{-c\sigma_j} \|\psi_j\|_{L^2},
\end{align*}
as desired.

The bounds on the higher Sobolev norms follow by an induction argument
and using \eqref{2.3} and integration by parts repeatedly as above.
\end{proof}

\bibliography{deg_wave}

\end{document}